\pgfplotsset{compat=1.18}
\def\namedlabel#1#2{\begingroup
    #2
    \def\@currentlabel{#2}
    \phantomsection\label{#1}\endgroup
}
\newtheorem{theorem}{Theorem}[]
\newtheorem{corollary}{Corollary}
\newtheorem{conj}{Conjecture}
\newtheorem{defi}[]{Definition}
\newtheorem{nota}{Notation}
\newtheorem{remark}{Remark}
\newtheorem{example}{Example}
\newtheorem{construction}{Construction}
\renewcommand{\geq}{\geqslant}
\renewcommand{\leq}{\leqslant}
\newcommand{\R}{\mathbb{R}}
\newcommand{\Q}{\mathbb{Q}}
\newcommand{\C}{\mathbb{C}}
\newcommand{\Z}{\mathbb{Z}}
\newcommand{\eps}{\varepsilon}
\newcommand{\appendixmacro}[1]{%
  \ifthenelse{\boolean{appendix_included}}%
    {#1}%
    {\cite[#1]{web}}%
}
\author[M. Matolcsi]{Máté Matolcsi}\thanks{Corresponding author: Máté Matolcsi, e-mail address: \texttt{matomate@renyi.hu}\\[3 pt]}
\author[I. Z. Ruzsa]{Imre Z. Ruzsa}
\author[D. Varga]{Dániel Varga}
\author[P. Zsámboki]{Pál Zsámboki}
\title{The fractional chromatic number of the plane is at least 4} 
\date{\today}
\subjclass[2020]{52C10, 52C15, 05C72}
\keywords{Chromatic number of the plane, distance-avoiding sets, fractional chromatic number, linear programming}
\begin{document}

\begin{abstract}

We prove that the fractional chromatic number $\chi_f(\R^2)$ of the unit distance graph of the Euclidean plane is greater than or equal to $4$. Interestingly, however, we cannot present a finite subgraph $G$ of the plane such that $\chi_f(G)\ge 4$. Instead, we utilize the concept of the geometric fractional chromatic number $\chi_{gf}(G)$, which was introduced recently in connection with density bounds for 1-avoiding sets. 

First, as $G$ ranges over finite subgraphs of the plane, we establish that the supremum of $\chi_f(G)$ is the same as that of $\chi_{gf}(G)$. The proof exploits the amenability of the group of Euclidean transformations in dimension 2 and, as such, we do not know whether the analogous statement holds in higher dimensions. We then present a specific planar unit distance graph $G$ on 27 vertices such that $\chi_{gf}(G)=4$, and conclude $\chi_f(\R^2)\ge 4$ as a corollary. 

As another main result we show that the finitary fractional chromatic number and the Hall ratio of the plane are equal. As a consequence, we conclude that there exist finite unit distance graphs with independence ratio $\frac{1}{4}+\varepsilon$, while we conjecture that the value $\frac{1}{4}$ cannot be reached. 
\end{abstract}

\maketitle

\section{Introduction}

The famous Hadwiger--Nelson problem seeks to determine the chromatic number of the unit distance graph of the Euclidean plane, i.e. the minimum number of colours needed to colour the points of $\R^2$ such that any pair of points at distance 1 have different colours. A major breakthrough was achieved by de Grey \cite{deG18}, showing that at least 5 colours are needed. Some related questions and ideas were subsequently  investigated further by the PolyMath16 project \cite{PM22}. 

\medskip

One such related problem is to determine the {\it fractional chromatic number} $\chi_f(\R^2)$ of the plane (we will recall all necessary definitions in Section \ref{sec2}). Despite considerable efforts, no finite unit distance graph $G$ has yet been found such that $4\le \chi_f(G)$. After the initial non-trivial results \cite{mah, sche, cra} improving on the fractional chromatic number 3.5 of the Moser spindle, the currently best published lower bound of $3.8991\le \chi_f(\R^2)$ belongs to Bellitto, Pêcher, and Sédillot \cite{BePS21}. Some further (unpublished) improvements reaching the value $3.9898\le \chi_f(\R^2)$ were made by Jaan Parts \cite{Pa22}. These latter estimates are testified by finite graphs having hundreds of (or even over a thousand)  vertices. The best upper bound on $\chi_f(\R^2)$ is given in \cite{hod, sche} as the reciprocal of the maximal known density $\delta_\mathrm{Croft}$ of a  measurable planar 1-avoiding set constructed by Croft \cite{Cr67}: 
\begin{equation}\label{fcnu}
\chi_f(\R^2)\le 1/\delta_\mathrm{Croft}< 4.36.     
\end{equation}

\medskip

The main result of this paper is the new bound $\chi_f(\R^2)\ge 4$, established in Corollary \ref{cor}. Interestingly, however, we are tempted to conjecture that this bound cannot be testified by any finite graph, i.e. we have $\chi_f(G)< 4$ for all finite unit distance graphs $G$ in the plane. 

\medskip

The key ingredient of the proof is the notion of the {\it geometric fractional chromatic number} $\chi_{gf}(G)$ of unit distance graphs, introduced recently in \cite{ambrus2023density}. The  idea is that one can construct fairly small graphs $G$ such that $\chi_{gf}(G)=4$, and then it is possible to employ a "blow-up" procedure to find a much larger graph $G'$ with $\chi_f(G')\ge 4-\eps$ for any $\eps>0$. 

\medskip

In another main result of the paper, Theorem \ref{alpha}, we prove that the finitary fractional chromatic number and the Hall ratio of the unit distance graph of the plane are equal. For general graphs, the connection between the fractional chromatic number and the Hall ratio has been studied extensively. Recent breakthroughs \cite{blu, dvo} show that the fractional chromatic number, in general, cannot be upper bounded by a constant multiple of the Hall ratio. These results disprove eariler conjectures in \cite{john, har}, and also have implications in the theory of bounded expansion \cite{nes}. In fact, for graphs on $n$ vertices, $\chi_f(G)/\rho(G)$ can be as large as $(\log n)^{1-o(1)}$, as proven in \cite{steiner} based on a modification of a construction in \cite{jan}.

\medskip

This paper is organized as follows. In Section \ref{sec2} we recall all the necessary notions, and introduce notations. In Section \ref{sec3} we first prove that finitary fractional chromatic number and the finitary geometric fractional chromatic number of the unit distance graph of the plane are equal, i.e. $\sup\{\chi_{gf}(G): \ G\subseteq \R^2, \ G \ \text{is finite}\}=\sup\{\chi_f(G): \ G\subseteq \R^2, \  G \ \text{is finite}\}$. Next we prove that the finitary fractional chromatic number equals the Hall ratio, i.e. 
 $\sup\{\chi_{f}(G): \ G\subseteq \R^2, \ G \ \text{is finite}\}=\sup\{|G|/\alpha(G): \  G\subseteq \R^2, \  G \ \text{is finite}  \}$. Finally, in Section \ref{sec4} we present a graph $G$ with 27 vertices such that $\chi_{gf}(G)=4$. As a direct corollary, we obtain $\chi_f(\R^2)\ge 4$.

\section{Preliminaries}\label{sec2}

Throughout the paper the term 'graph' always refers to a unit distance graph $G$ in the plane, i.e.~a graph given by a vertex set $X\subseteq \R^2$ that has an edge between two vertices $x, y\in X$ if and only if their Euclidean distance is 1. The vertex set $X$ is usually finite, but in certain cases it may be infinite.  Sometimes we may abuse notation, and use the letters $X$ and $G$ interchangeably to refer to the same graph. 

\medskip

Given a finite unit distance graph $G$, denote by $\mathcal {I}(G)$ the set of all  independent sets of $G$, and let $\mathcal {I}(G,x)$ be the set of all those independent sets which include the vertex $x \in G$. 

\medskip 

A {\it fractional colouring} of $G$ is a function $\gamma: \mathcal {I}(G)\to \R$ such that 

\begin{equation}\label{noneg}
\gamma(S)\ge 0 \textrm{\ for\ all\ } S\in \mathcal {I}(G),
\end{equation}
and
\begin{equation} \label{agix}
\sum_{S\in\mathcal{I}(G,x)} \gamma(S) \ge 1
\end{equation}
for every vertex $x \in G$.

\medskip

To avoid any possible confusion later, we extend the domain of $\gamma$ to include all subsets of $G$, and define  $\gamma(S)=0$ for all non-independent subsets $S$. 

\medskip

The quantity $\sum_{S\in \mathcal{I}(G)} \gamma(S)$ is called the {\it weight} of $\gamma$.

\medskip

The set of all fractional colourings of $G$ will be denoted by $fc(G)$.

\medskip

In order to enable easier subsequent comparison with geometric fractional colourings, we call a fractional colouring $\gamma$ of $G$ {\it regular}, if

\addtocounter{equation}{-1} 
\renewcommand{\theequation}{\arabic{equation}'} 

\begin{equation} \label{agix_prime}
\sum_{S\in\mathcal{I}(G,x)} \gamma(S) = 1
\end{equation}
for every vertex $x \in G$.

\renewcommand{\theequation}{\arabic{equation}} 

\medskip

The set of all regular fractional colourings of $G$ will be denoted by $rfc(G)$. 

\medskip

The {\em fractional chromatic number} of $G$ is defined as
\begin{equation} \label{agi}
\chi_f(G)=\min_{\gamma\in fc(G)} \sum _{S\in \mathcal{I}(G)} \gamma(S),
\end{equation}
that is, the minimum  weight of a fractional colouring of $G$. It is fairly easy to see, as shown in \cite[Lemma 2]{ambrus2023density}, that the minimum value does not change if $\gamma$ ranges over all regular fractional colourings only. That is 
\begin{equation}\label{tfc}
 \chi_f(G)=\min_{\gamma\in fc(G)} \sum _{S\in \mathcal{I}(G)} \gamma(S)=\min_{\gamma\in rfc(G)} \sum _{S\in \mathcal{I}(G)} \gamma(S)   
\end{equation}

\medskip

A {\it geometric fractional colouring} of $G$ is a regular fractional colouring  (i.e.  satisfying \eqref{noneg}, \eqref{agix_prime}), with the additional constraint 
\begin{equation}\label{congr}
\sum_{Y\subseteq S}\gamma(S) = \sum_{Y'\subseteq S'}\gamma(S')
\end{equation}
for all geometrically congruent subsets $Y, Y'$ of $G$.

\medskip

The set of all geometric fractional colourings of $G$ will be denoted by $gfc(G)$.

\medskip

The {\em geometric fractional chromatic number} of $G$, denoted by $\chi_{gf}(G)$, as introduced in \cite[Definition 2]{ambrus2023density}, is defined as 
\begin{equation*}
\chi_{gf}(G)=\min_{\gamma\in gfc(G)} \sum _{S\in \mathcal{I}(G)} \gamma(S),
\end{equation*}
that is, the minimum weight of a geometric fractional colouring of $G$. 

\medskip

Due to the inclusion $gfc(G)\subseteq fc(G)$, we clearly have $\chi_{gf}(G) \geq \chi_f(G)$ for any finite graph $G$. Furthermore, by a standard continuity and compactness argument, the following approximation property holds: for any finite $G$ and any $\eps>0$ there exists a $\delta>0$ such that if a regular fractional colouring $\gamma$ satisfies 
\begin{equation}\label{delta}
\left|\sum_{Y\subseteq S}\gamma(S) - \sum_{Y'\subseteq S'}\gamma(S')\right|<\delta
\end{equation}
for all congruent subsets $Y, Y'\subseteq G$ then 
\begin{equation}\label{epsd}
\sum_S \gamma(S)> \chi_{gf}(G)-\eps.
\end{equation}

\medskip

Given a fractional colouring $\gamma$ of $G$, we define its {\it aggregate functional} $\overline{\gamma}$ as follows:  for any subset $S\subseteq G$ let 
\begin{equation}\label{aggr}
\overline{\gamma}(S)=\sum_{S'\supset S} \gamma(S')
\end{equation}

Note, in particular, that $\overline{\gamma}(\emptyset)=\sum _{S\in \mathcal{I}(G)} \gamma(S)$ is the weight of $\gamma$.

\medskip

It is easy to see that we can recover $\gamma$ from its aggregate functional $\overline{\gamma}$ via an inclusion-exclusion formula: 
$$\gamma(S)=\sum_{S\subseteq S'} \overline{\gamma}(S') (-1)^{|S'\setminus S|}$$

\medskip

For any subgraph $H\subseteq G$, any fractional colouring $\gamma$ on $G$ induces a fractional colouring $\gamma_H$ on $H$ as follows: for any $S\subseteq H$ let
$$ \gamma_H(S)=\sum_{Y\subseteq G\setminus H} \gamma(S\cup Y)$$
It is easy to see that $\gamma_H$ is indeed a fractional colouring of $H$. Also, there is an evident relationship between the corresponding aggregate functionals: 
 for any $S\subseteq H$ we have 
 \begin{equation}\label{le}
 \overline{\gamma_H}(S)=\overline{\gamma}(S).
\end{equation}

\medskip

In particular, the weights of $\gamma$ and $\gamma_H$ are the same: 

\begin{equation}\label{weight}
\sum_{S\subseteq G} \gamma(S)=\overline{\gamma}(\emptyset)=\overline{\gamma_H}(\emptyset)=\sum_{S\subseteq H} \overline{\gamma_H}(S).    
\end{equation}
This immediately implies that both the fractional chromatic number and the geometric fractional chromatic number are monotonically increasing quantities with respect to inclusion: if $H\subseteq G$ then $\chi_f(H)\le \chi_f(G)$ and $\chi_{gf}(H)\le \chi_{gf}(G)$. 

\medskip

The {\it independence number} $\alpha(G)$ of a finite graph $G$ is the maximal number of independent vertices in $G$. The {\it independence ratio} of $G$ is given by $\alpha(G)/|G|$, and will be denoted by $\alpha_1(G)$. It is well known (see e.g. \cite[Proposition 3.1.1]{sche}) that 
\begin{equation} \label{fcna}
\chi_f(G) \alpha_1(G)\ge 1
\end{equation}
for every finite graph $G$. The {\it Hall ratio} $\rho(G)$ of $G$ is defined as $\rho(G)=\max_{H\subseteq G} \frac{1}{\alpha_1(H)}$. Equation \eqref{fcna} and the monotonically increasing property of the fractional chromatic number imply the well-known inequality  $\rho(G)\le \chi_f(G)$. 

\medskip

For the infinite unit distance graph of the Euclidean plane $\R^2$, the situation is more delicate. On the one hand, a well-known result of de Bruijn and Erd\H os \cite{edb} says (assuming the axiom of choice) that the chromatic number $\chi(\R^2)$ of the plane is equal to the supremum of the chromatic number of finite subgraphs. On the other hand, the analogous result is not known for the fractional chromatic number $\chi_f(\R^2)$. Therefore, it makes sense to introduce the following two different notations. 

\medskip

Let $\chi_f(\R^2):=\inf_{\gamma\in fc(\R^2)}\{\sum \gamma(I)\}$ denote
the {\it fractional chromatic number of the plane}, where $fc(\R^2)$ is the collection of all possible fractional colourings of the plane, i.e. nonnegative weight functions $\gamma$, such that $\gamma(I)>0$ occurs  for finitely many independent sets $I$ only, and $\sum_{x\in I}\gamma(I)\ge 1$ for each $x\in \R^2$. 
A standard approximation argument shows that it makes no difference if we allow $\gamma(I)$ to take positive values for infinitely many sets $I$. Also, in certain textbooks (e.g. \cite{sche}) $\chi_f(\R^2)$ is defined as the infimum of fractions $\frac{a}{b}$ such that a $b$-fold colouring of the plane with $a$ colours exists. It is easy to see that this definition is equivalent to the one above. 

\medskip

We define the {\it finitary fractional chromatic number of the plane} as $\chi_{f,0}(\R^2):=\sup\{ \chi_f(G): \ G\subseteq \R^2, \ G \ \text{is finite}\}$. It is clear that $\chi_{f,0}(\R^2)\le \chi_f(\R^2)$, but it is not known whether equality holds. The main result of this paper is the lower bound $4\le \chi_{f,0}(\R^2)$.  As mentioned in the Introduction, the previously known best (unpublished) bound was $3.98\le \chi_{f,0}(\R^2)$ (see \cite{Pa22}). With our new lower bound we conclude the following chain of inequalities (for the upper bound see \cite{hod, sche}) : 

\begin{equation}\label{chain1}
4\le \chi_{f,0}(\R^2)\le \chi_f(\R^2)\le 1/\delta_\mathrm{Croft}=4.35987...
\end{equation} 

\medskip

Similar notions can be introduced for the geometric chromatic number. First, the {\it finitary geometric chromatic number of the plane} is defined as $\chi_{gf,0}(\R^2)=\sup\{ \chi_{gf}(G): \ G\subseteq \R^2, \ G \ \text{is finite}\}$. Second, we could introduce $\chi_{gf}(\R^2)$ as the infimum of possible weights of geometric fractional colourings of the Euclidean plane, but this notion will not be needed in the present paper \footnote{In fact, it is not very difficult to show that geometric fractional colourings of $\R^2$  do not exist with finite weight. As such, the notion of $\chi_{gf}(\R^2)$ is meaningless. 
}. 

\medskip

Finally, we introduce the {\it finitary independence ratio of the plane} as 
$\alpha_{1}(\R^2)=\inf \{ \alpha_1(G): \ G\subseteq \R^2, \ G \ \text{is finite}\}$. Correspondingly, we define the {\it finitary Hall ratio} of the plane as $\rho(\R^2)=\frac{1}{\alpha_1(\R^2)}$. For comparison, the {\it measurable independence ratio} $m_1(X)$ of any subset $X\subseteq \R^2$ of finite positive Lebesgue measure is defined as $\sup_A \lambda(A)/\lambda(X)$, where $\lambda$ denotes the Lebesgue measure, and the supremum is taken over all measurable independent (also known as 1-avoiding) subsets $A\subseteq X$. The measurable independence ratio of $\R^2$ is then defined as $m_1(\R^2)=\inf_X m_1(X)$, where the infimum is taken over all sets $X$ of finite positive measure. An equivalent definition of $m_1(\R^2)$ is given in \cite{ambrus2023density} as the maximal upper density of a measurable 1-avoiding subset of $\R^2$. 

\medskip

Another main result of this paper is Theorem 2, showing the equality $\rho(\R^2)=\chi_{f,0}(\R^2)$. Using our new bound, $4\le \chi_{f,0}(\R^2)$, this implies $\alpha_1(\R^2)\le 1/4$. As such, we conclude the following chain of inequalities:

\begin{equation}\label{chain}
0.22936...=\delta_\mathrm{Croft}\le m_1(\R^2)\le  \alpha_1(\R^2)\le 1/4.   
\end{equation}

\medskip

By the results of \cite{ambrus2023density} we also know that $m_1(\R^2)\le 0.247$. Altogether, in the chain of inequalities \eqref{chain} we conjecture that the following equalities hold: 
$\delta_\mathrm{Croft}= m_1(\R^2)$ and $  \alpha_1(\R^2)= 1/4$.

\section{Fractional chromatic number and  Hall ratio}\label{sec3}

In this section we prove that $\chi_{gf,0}(\R^2)=\chi_{f,0}(\R^2)=\rho(\R^2)$. The first equality depends on a "blow-up" construction: given a finite graph $G$ with high value of $\chi_{gf}(G)$, we consider the union $G'$ of many congruent copies of $G$, and show that $\chi_f(G') $ must necessarily be almost as high as $\chi_{gf}(G)$. Interestingly, the argument uses the amenability of the group of Euclidean transformations in dimension 2, and cannot automatically be extended to higher dimensions. We do not know whether $\chi_{gf,0}(\R^d)=\chi_{f,0}(\R^d)$ holds for any $d\ge 3$. 

\medskip

The second equality, $ \chi_{f,0}(\R^2)=1/\alpha_1(\R^2)$, is proven by a fairly standard argument, using large "discrete cubes". 

\begin{theorem}\label{fcng}
$\chi_{gf,0}(\R^2)=\chi_{f,0}(\R^2)$.
\end{theorem}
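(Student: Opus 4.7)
The inequality $\chi_{gf,0}(\R^2)\geq \chi_{f,0}(\R^2)$ is immediate from the inclusion $gfc(G)\subseteq fc(G)$. For the reverse, I plan to show that for any finite $G\subseteq \R^2$ with $\chi_{gf}(G)=c$ and any $\varepsilon>0$, there is a finite unit distance graph $G'\subseteq\R^2$ with $\chi_f(G')\geq c-\varepsilon$. Let $\Sigma\subseteq E(2)$ be a finite set of Euclidean transformations obtained by fixing, for each pair of congruent subsets $(Y,Y')$ of $G$, one $\sigma_{Y,Y'}\in E(2)$ with $\sigma_{Y,Y'}Y = Y'$. I will set $G':=\bigcup_{i=1}^N T_iG$ for a finite set $\{T_1,\dots,T_N\}\subseteq E(2)$ chosen below; overlaps between copies are harmless because \eqref{weight} holds regardless of whether the copies meet.

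The main step is an averaging. Suppose for contradiction that some regular fractional coloring $\gamma$ of $G'$ (regular by \eqref{tfc}) has weight $w<c-\varepsilon$; in particular $w\leq c\leq\chi(\R^2)\leq 7$. For each $i$, pull the induced coloring $\gamma_{T_iG}$ back to $G$ via $\gamma^{(i)}(S):=\gamma_{T_iG}(T_iS)$ and average $\widetilde\gamma:=\frac{1}{N}\sum_i\gamma^{(i)}$. By \eqref{weight} each $\gamma^{(i)}$ is a regular fractional coloring of $G$ of weight $w$, so $\widetilde\gamma$ is one too. Combining the pullback identity $\overline{\gamma^{(i)}}(Y)=\overline{\gamma_{T_iG}}(T_iY)$ with \eqref{le} yields
$$\overline{\widetilde\gamma}(Y) \,=\, \frac{1}{N}\sum_{i=1}^N \overline{\gamma}(T_iY), \qquad Y\subseteq G.$$

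Comparing $\overline{\widetilde\gamma}(Y)$ with $\overline{\widetilde\gamma}(\sigma Y)$ for $\sigma\in\Sigma$ via the reindexing $\sum_i\overline\gamma(T_i\sigma Y) = \sum_{T\in\{T_i\}\sigma}\overline\gamma(TY)$ gives the symmetric-difference bound
$$\bigl|\overline{\widetilde\gamma}(Y)-\overline{\widetilde\gamma}(\sigma Y)\bigr|\leq \frac{\|\overline\gamma\|_\infty\cdot|\{T_i\}\triangle\{T_i\sigma\}|}{N}\leq \frac{7\,|\{T_i\}\triangle\{T_i\sigma\}|}{N}.$$
I therefore need $\{T_i\}$ approximately right-invariant under every $\sigma\in\Sigma$. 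The subgroup $H:=\langle\Sigma\rangle$ of $E(2)\cong\R^2\rtimes O(2)$ is solvable (hence amenable as an abstract group) because $E(2)$ is, and amenability passes to subgroups; consequently $H$ admits a Følner sequence of finite sets. Taking $\{T_i\}$ deep in this Følner sequence drives the displayed bound below the $\delta$ supplied by the approximation property \eqref{delta}--\eqref{epsd} for $\chi_{gf}(G)$, and I conclude that the weight of $\widetilde\gamma$ exceeds $c-\varepsilon$. But this weight equals $w<c-\varepsilon$, a contradiction. Hence $\chi_f(G')\geq c-\varepsilon$.

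I expect the Følner/amenability step to be the crux. It works here only because $SO(2)$ is abelian and $E(2)$ is consequently solvable; in dimensions $d\geq 3$, $SO(d)$ contains a free nonabelian subgroup (Hausdorff--Banach--Tarski), so the analogous $H$ need not be amenable and the blow-up method does not automatically extend. A secondary technicality --- that the copies $T_iG$ may share vertices whenever $T_j^{-1}T_i\in H$ sends part of $G$ into $G$ --- does not affect the averaging or weight-preservation identities, which hold by their formal definitions on the unioned vertex set.
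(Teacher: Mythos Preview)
Your proof is correct and follows essentially the same route as the paper: blow up $G$ by a F\o lner set in the (solvable, hence amenable) countable group generated by the congruence transformations, average the pulled-back regular colourings, and invoke the approximation property \eqref{delta}--\eqref{epsd}. The only slip is the inequality $c\le\chi(\R^2)\le 7$, which is unjustified (a proper colouring of $\R^2$ need not lie in $gfc(G)$) but also unnecessary: for nonempty $Y$ the regularity condition already gives $\overline\gamma(T_iY)\le 1$, and in any case $\overline\gamma(T_iY)\le\overline\gamma(\emptyset)=w<c$, which suffices for the F\o lner estimate.
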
 

\begin{proof}
It is clear that $\chi_{f,0}(\R^2)\le \chi_{gf,0}(\R^2)$, and we only need to prove the reverse inequality. To this end, for any finite unit distance graph $H$, and any $\eps> 0$, we will construct another (typically much larger) finite graph $G$ such that $\chi_f(G)\ge \chi_{gf}(H)-\eps$.

\medskip

We will call any subset $S\subseteq H$ a {\it geometric shape in H}, if $S$ contains at least two points. 
Let $\varphi$ be a Euclidean transformation in the plane (i.e. a distance preserving transformation) for which there exist some geometric shapes $S, S'\subseteq H$ such that $\varphi(S)=S'$ (note that $S=S'$ is not excluded). Let $T$ denote the set of all such transformations. Clearly, $T$ is a finite set, contains the identity operator $I$, and is symmetrical with respect to $I$, i.e.~for every $\varphi \in T$, we have $\varphi^{-1}\in T$.   

\medskip

Consider the group $K$ of Euclidean transformations generated by $T$. Clearly, $K$ is countable and solvable. It is known (see e.g. \cite[Corollary 2.4 and Theorem 3.6]{gar}, or \cite[Proposition 7 and Theorem 1]{tao}) that all solvable groups are amenable, and all amenable groups have the F{\o}lner property. As such, there exists a sequence of sets $R_k\subseteq K$ such that 
\begin{equation}\label{f1}
\frac{|R_kT\triangle R_k|}{|R_k|}\to 0, 
\end{equation}

where $A\triangle B$ denotes symmetric difference. 

\medskip

We remark here that
the F{\o}lner property is no longer true for Euclidean transformations in dimensions $d\ge 3$, so the proof breaks down in those dimensions (but the statement of the theorem may be still be true). Also, given the set $T$, it is not hard to construct such a sequence $R_k$, if we wish to make the proof constructive. Finally, note that due to the fact $I\in T$, we have $R_k\subseteq R_kT$ for every $k\ge 1$, which simplifies \eqref{f1} to 
\begin{equation}\label{tk}
\frac{|R_kT\setminus R_k|}{|R_k|}\to 0. 
\end{equation}

\medskip

Let $G_k=\cup_{\sigma\in R_k}\sigma H$. We will prove that for large enough $k$, we have $\chi_f(G_k)\ge \chi_{gf}(H)-\eps$. 

\medskip

Let $\gamma$ be a fractional colouring of $G_k$ with minimal weight, i.e.~$\sum_S \gamma(S)=\chi_f(G_k)$. By \eqref{tfc} we can assume that $\gamma$ is regular, i.e.~it satisfies \eqref{agix_prime}. Let $\overline{\gamma}$ be the aggregate functional corresponding to $\gamma$. For every $\sigma\in R_k$, the graph $\sigma H$ is a subgraph of $G_k$, and therefore the restriction  $\overline{\gamma}|_{\sigma H}$ is an aggregate functional on $\sigma H$. This aggregate functional can be naturally pulled back to $H$ as follows: for every $S\subseteq H$ let $\sigma^{-1}(\overline{\gamma}|_{\sigma H})(S)=\overline{\gamma}(\sigma S)$. Note that by equation \eqref{weight}, all the aggregate functionals $\sigma^{-1}(\overline{\gamma}|_{\sigma H})$ have the same weight on $H$ as $\overline{\gamma}$ had on $G_k$, namely $\chi_f (G_k)$.  

\medskip

Finally, the average
\begin{equation}\label{aver}
\overline{\gamma_0}=\frac{1}{|R_k|}\sum_{\sigma\in R_k} \sigma^{-1}(\overline{\gamma}|_{\sigma H})
\end{equation} 
is also an aggregate functional on $H$, because fractional colourings and the corresponding aggregate functionals form a convex set. Note that $\overline{\gamma_0}$ still has weight $\chi_f(G_k)$ on $H$, and it automatically satisfies \eqref{noneg} and \eqref{agix_prime}. It remains to show that it {\it nearly} satisfies \eqref{congr}, i.e.~it satisfies \eqref{delta} with some $\delta_k$ converging to 0 as $k\to \infty$. 

\medskip

Let $Y, Y'\subseteq H$ be two congruent geometric shapes in $H$. Then there exists a transformation $\tau\in T$ such that $\tau(Y)=Y'$. Let us compare the values of $\overline{\gamma_0}$ on $Y$ and $Y'$. By definition, we have 
\begin{equation}\label{y}
\overline{\gamma_0}(Y)=
\frac{1}{|R_k|}\sum_{\sigma\in R_k} \overline{\gamma}(\sigma Y)
\end{equation}
and 
\begin{equation}\label{yp}
\overline{\gamma_0}(Y')=
\frac{1}{|R_k|}\sum_{\sigma\in R_k} \overline{\gamma}(\sigma \tau Y) .  
\end{equation}

Many terms in the two sums are identical, the exceptions being when $\sigma\in R_k$ but $\sigma \tau \notin R_k$, and when $\sigma\in R_k$ but $\sigma \tau^{-1}\notin R_k$. All of these exceptions are contained in $R_kT\setminus R_k$, due to the fact that $\tau, \tau^{-1}\in T$. Moreover,  all the terms in the sums above are less than 4.36 due to \eqref{fcnu}. Therefore, 
\begin{equation}\label{est}
|\overline{\gamma_0}(Y)-\overline{\gamma_0}(Y')|<\frac{4.36 |R_kT\setminus R_k|}{|R_k|}=\delta_k
\end{equation}

The proof is completed by observing that $\delta_k\to 0$ by \eqref{tk}, and therefore $\chi_f(G_k)> \chi_{gf}(H)-\eps_k$ with some $\eps_k\to 0$, due to \eqref{delta}.
\end{proof}

\medskip

In the next step, we establish the equality of the finitary fractional chromatic number and the finitary Hall ratio of $\R^2$. 

\begin{theorem}\label{alpha}
$\chi_{f,0}(\R^2)=\rho(\R^2).$
\end{theorem}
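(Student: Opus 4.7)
The inequality $\chi_{f,0}(\R^2)\ge\rho(\R^2)$ is essentially free: for every finite unit distance graph $G\subseteq\R^2$ and every subgraph $H'\subseteq G$, \eqref{fcna} together with monotonicity of $\chi_f$ under inclusion give $\chi_f(G)\ge\chi_f(H')\ge|H'|/\alpha(H')$. Taking the supremum over finite $G$ on the left and over finite $H'\subseteq\R^2$ on the right yields $\chi_{f,0}(\R^2)\ge\rho(\R^2)$.

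For the reverse direction $\chi_{f,0}(\R^2)\le\rho(\R^2)$, fix a finite $H\subseteq\R^2$ with $\chi_f(H)=c$ and $\eps>0$; the goal is to exhibit a finite unit distance graph $G\subseteq\R^2$ with $|G|/\alpha(G)\ge c-\eps$. Applying LP duality to the fractional colouring LP of $H$ produces a rational vertex weighting $w:V(H)\to\Q_{\ge 0}$ (the fractional clique) with $\sum_v w_v=c$ and $\sum_{v\in I}w_v\le 1$ for every $I\in\mathcal{I}(H)$. Clearing the common denominator, $a_v:=Nw_v\in\Z_{\ge 0}$ satisfies $\sum_v a_v=cN$ and $\sum_{v\in I}a_v\le N$ for every independent $I$.

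The plan is to realise the abstract blow-up $H^{(a)}$ (replace each $v$ by an independent cluster of $a_v$ copies, with the edge $v\sim u$ blown up to enough adjacencies between clusters) as a planar unit distance graph, using a large discrete cube of perturbations as the geometric raw material. Fix a small $\delta>0$ and a large integer $M$, and let $C_M=\{0,1,\dots,M-1\}^2\cdot\delta$. For each $v\in V(H)$ we pick a subset $D_v\subseteq C_M$ of size $a_v$ and let $G$ be the unit distance graph on the set $\bigcup_v(v+D_v)$. The two properties we want are: (i) copies of the same $v$ are at pairwise non-unit distance, so each cluster can lie in an independent set; and (ii) for every edge $v\sim_H u$, the clusters $v+D_v$ and $u+D_u$ are joined by enough unit distance edges that any independent set of $G$ is forced to consist of whole clusters indexed by an independent set of $H$. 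Property (i) is automatic once $\delta$ is small, since all inter-copy distances in a cluster are $O(\delta)<1$. Granting (ii), $\alpha(G)\le\max_{I\in\mathcal{I}(H)}\sum_{v\in I}a_v\le N$ while $|G|=cN$, so $|G|/\alpha(G)\ge c$, giving $\rho(\R^2)\ge c-\eps$ as required.

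The main obstacle is property (ii). The plane is geometrically too rigid to support the full bipartite adjacency between clusters that the abstract blow-up $H^{(a)}$ requires (as soon as $a_v,a_u\ge 3$ one would need many common points on several unit circles). The role of the large discrete cube $C_M$ is precisely to overcome this: it supplies $M^2$ candidate perturbations, and a counting / averaging argument on $C_M$ — choosing $D_v,D_u$ so that the approximate constraint $(x-y)\perp(v-u)$ combined with exact length control $|(v-u)+\delta(x-y)|=1$ is met by a positive density of pairs — produces enough cluster-to-cluster unit distance edges to enforce (ii) up to an error $o(N)$ that can be absorbed into $\eps$. This is the technical heart of the argument; the rest of the proof is bookkeeping.
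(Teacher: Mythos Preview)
Your easy direction is fine, but the reverse direction has a genuine gap that your handwave about ``counting / averaging on $C_M$'' does not close.

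Property~(ii) asks that the unit-distance edges between the clusters $v+D_v$ and $u+D_u$ be dense enough to force every independent set of $G$ to respect the independence structure of $H$. But for a fixed edge $vu$ of $H$ and perturbations $x\in D_v$, $y\in D_u$, the exact constraint $|(v-u)+\delta(x-y)|=1$ is a codimension-one condition on $x-y$: each fixed $x$ has at most two partners $y$ in any finite set at exact unit distance (the unit circle around $v+\delta x$ meets the cluster $u+D_u$ in at most two points for a generic $\delta$-grid). So the bipartite unit-distance graph between two clusters of sizes $a_v,a_u$ has at most $O(\min(a_v,a_u))$ edges, i.e.\ density $O(1/\max(a_v,a_u))$, not a positive density. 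Worse, the sets $D_v$ have to satisfy these perpendicularity-type constraints simultaneously for \emph{every} edge of $H$ incident to $v$, and different edges point in different directions, so the constraints are incompatible. In short, the lexicographic blow-up $H^{(a)}$ that realises the fractional clique is simply not embeddable as a planar unit distance graph, nor approximately so; the obstruction you yourself flag (``the plane is geometrically too rigid'') is real and the discrete cube does not dissolve it.

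The paper proves the non-trivial direction the other way round: rather than manufacturing a graph with small independence ratio from one with large $\chi_f$, it shows directly that $\chi_f(G)\le 1/\alpha_1(\R^2)$ for every finite $G$. Write the vertices of $G$ in a $\Q$-basis $v_1,\dots,v_d$ with integer coefficients, take a large box $A=\{\sum\beta_i v_i:|\beta_i|\le N\}$ in the resulting lattice, and let $B\subseteq A$ be an independent set of size $\ge\alpha_1(\R^2)|A|$ (which exists by definition of $\alpha_1(\R^2)$). For each lattice translate $t$, the set $M_t=G\cap(B+t)$ is independent in $G$; putting weight $\tfrac{1}{|B|}$ on each such $M_t$ gives a fractional colouring of $G$ of total weight at most $(2N+2k+1)^d/(\alpha_1(\R^2)(2N+1)^d)\to 1/\alpha_1(\R^2)$. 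This translate-and-average construction needs no geometric blow-up and no control on bipartite densities; it is the idea your proposal is missing.
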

\begin{proof} 
Recall that the finitary Hall ratio of the plane was defined as $\rho(\R^2)=1/\alpha_1(\R^2)$.  
The inequality $\chi_{f,0}(\R^2)\ge 1/\alpha_1(\R^2)$ follows immediately from \eqref{fcna}. Therefore, we only need to  prove that 
$\chi_f(G)\le 1/\alpha_1(\R^2)$ for any finite graph $G$. 

\medskip

Let $G$ be a finite graph with vertices $x_1, \dots, x_n\in \R^2$. The rational linear combinations $\sum_{i=1}^n \beta_i x_i$ form a vector space $V$ over $\Q$. Let $v_1, \dots, v_d$ be a basis of $V$. We may assume, by rescaling the vectors $v_i$ if necessary, that in the decompositions $x_j=\sum_{i=1}^d \xi_i^{(j)} v_i$, all coefficients $\xi_i^{(j)}$ are integers. Furthermore, there is some fixed $k\in \Z$ such that $|\xi_i^{(j)}|\le k$ for all $1\le i\le d$, $1\le j\le n$.

\medskip

Consider now the lattice $L=\{\sum_{i=1}^d \beta_iv_i : \beta_i\in \Z\}$, and let $A=\{\sum_{i=1}^d \beta_iv_i \ : \  -N\le \beta_i\le N$\} be a large ``cube'' of size $|A|=(2N+1)^d$ in this lattice. By definition, there exists an independent set $B\subseteq A$ of size $|B|\ge \alpha_1(\R^2) |A|$.  

\medskip

For any $t\in L$, let $M_t=G\cap (B+t)$. Observe that $M_t$ is an independent subset of $G$, and it is non-empty if and  only if $t\in G-B.$ In this case, the  
coordinates of $t$ have absolute value $\le N+k$, so the number of such vectors $t$ is not more than $(2N+2k+1)^d$. Let us define the averaged  counting function $\gamma$ as follows: for any $S\subseteq G$ let 
\begin{equation}\label{count}
\gamma(S)=\frac{1}{|B|}|\{t: M_t=S\}|    
\end{equation}

\medskip

We claim that $\gamma$ is a fractional colouring of $G$.  Indeed, for any $x\in G$ we have
$$\sum_{x\in S} \gamma(S)=
\frac{1}{|B|} \sum_{x\in S}
|\{t: M_t=S\}|=\frac{1}{|B|}
|\{t : t\in x-B\}|=
1.$$

\medskip

The weight of the fractional colouring $\gamma$ is given by 
$\sum_S \gamma(S)=\frac{|G-B|}{|B|}\le \frac{(2N+2k+1)^d}{\alpha_1(\R^2) (2N+1)^d}\to 1/\alpha_1(\R^2)$ as $N\to \infty$. \end{proof} 

\medskip

The significance of Theorem \ref{alpha} is twofold.

\medskip

First, combined with Corollary \ref{cor} below, it shows that for every $\eps>0$  there exist a finite graph $G\subseteq \R^2$ such that the largest independent subset of $G$ is only of size $(\frac{1}{4}+\eps)|G|$. (The proof of Theorem \ref{alpha} implies that such a graph $G$ can be taken to be a large "discrete cube" in the lattice generated by a finite graph  $H$ with $\chi_f(H)\approx 4.$) This immediately implies that the upper density of any measurable 1-avoiding subset of the plane is $\le 1/4$. However, in order to prove the strict inequality $m_1(\R^2)<1/4$, Conjecture \ref{conj} below suggests that the Fourier analytic tools in \cite{ambrus2023density} were necessary. 

\medskip

Second, it was proved in \cite{ambrus2023density} that $m_1(\R^2)<1/4$, therefore, if Conjecture \ref{conj} turns out to be true, we obtain the following very interesting property of the unit distance graph of the plane: $\alpha_1(\R^2)\ne m_1(\R^2)$, i.e.~the finitary independence ratio and the measurable independence ratio of $\R^2$ do not agree. This would be an interesting example of a natural infinite graph where a measurable and a non-measurable parameter of the graph are different. 

\section{Constructing a graph $G$ with $\chi_{gf}(G)= 4$}\label{sec4}

In this section we present a finite planar  unit distance graph $G_{27}$ on 27 vertices such that $\chi_{gf}(G_{27})=4$. The graph was found by a computer search. We describe the details of the search in \appendixmacro{Appendix~A}. 

\medskip

The vertices of the graph $G_{27}$ all belong to the {\it Moser lattice}, $L_\mathrm{Moser}$. In order to define $L_\mathrm{Moser}$ we will now identify $\R^2$ with the set of complex numbers $\C$.

\begin{defi}
    Let $\omega_1=\frac12 + i\frac{\sqrt3}{2}$ and $\omega_3=\frac56 + i\frac{\sqrt{11}}{6}$. The \emph{Moser lattice} is the additive subgroup defined by
    $$
    L_\mathrm{Moser} = \{a + b\omega_1 + c\omega_3 + d\omega_1\omega_3:a,b,c,d\in\mathbb Z\}\subseteq\mathbb C.
    $$
\end{defi}

\begin{remark}\label{rem:Moser is lattice}

The field extension $\mathbb Q[\omega_1,\omega_3]/\mathbb Q$ has degree 4, and the 4-element set $\{1, \omega_1, \omega_3,\allowbreak \omega_1\omega_3\}$ is 
independent over $\mathbb Q$. Therefore, any quadruple of integers $(a, b, c, d)$ uniquely determines an element of $L_\mathrm{Moser}$. 

\end{remark}

\medskip

\begin{defi}\label{g27}
Each column in the following table gives the Moser coefficients $(a, b, c, d)$ of a vertex of the graph $G_{27}$:
\setcounter{MaxMatrixCols}{30}
$$
\begin{pmatrix}
1 & 0 & 2 & 2 & 1 & 2 & 1 & 1 & 1 & 0 & 3 & 3 & 1 & 2 & 2 & 1 & 0 & 0 & 0 & 3 & 2 & 3 & 1 & 2 & 1 & 2 & 3\\
4 & 4 & 3 & 3 & 3 & 3 & 4 & 2 & 3 & 4 & 3 & 2 & 3 & 3 & 2 & 3 & 2 & 3 & 2 & 0 & 1 & 1 & 1 & 1 & 2 & 2 & 1\\
2 & 3 & 0 & 1 & 2 & 2 & 2 & 3 & 3 & 3 & 0 & 1 & 1 & 1 & 2 & 2 & 3 & 3 & 4 & 1 & 1 & 1 & 2 & 2 & 2 & 2 & 0\\
0 & 0 & 1 & 1 & 1 & 1 & 1 & 1 & 1 & 1 & 2 & 2 & 2 & 2 & 2 & 2 & 2 & 2 & 2 & 3 & 3 & 3 & 3 & 3 & 3 & 3 & 4
\end{pmatrix}
$$
\end{defi}

\medskip

\noindent The graph $G_{27}$ is depicted in Figure \ref{fig_g27}. Unfortunately, it does not give much geometric intuition to the following result. 

\begin{figure}
  \centering
  \includegraphics[width=0.75\linewidth]{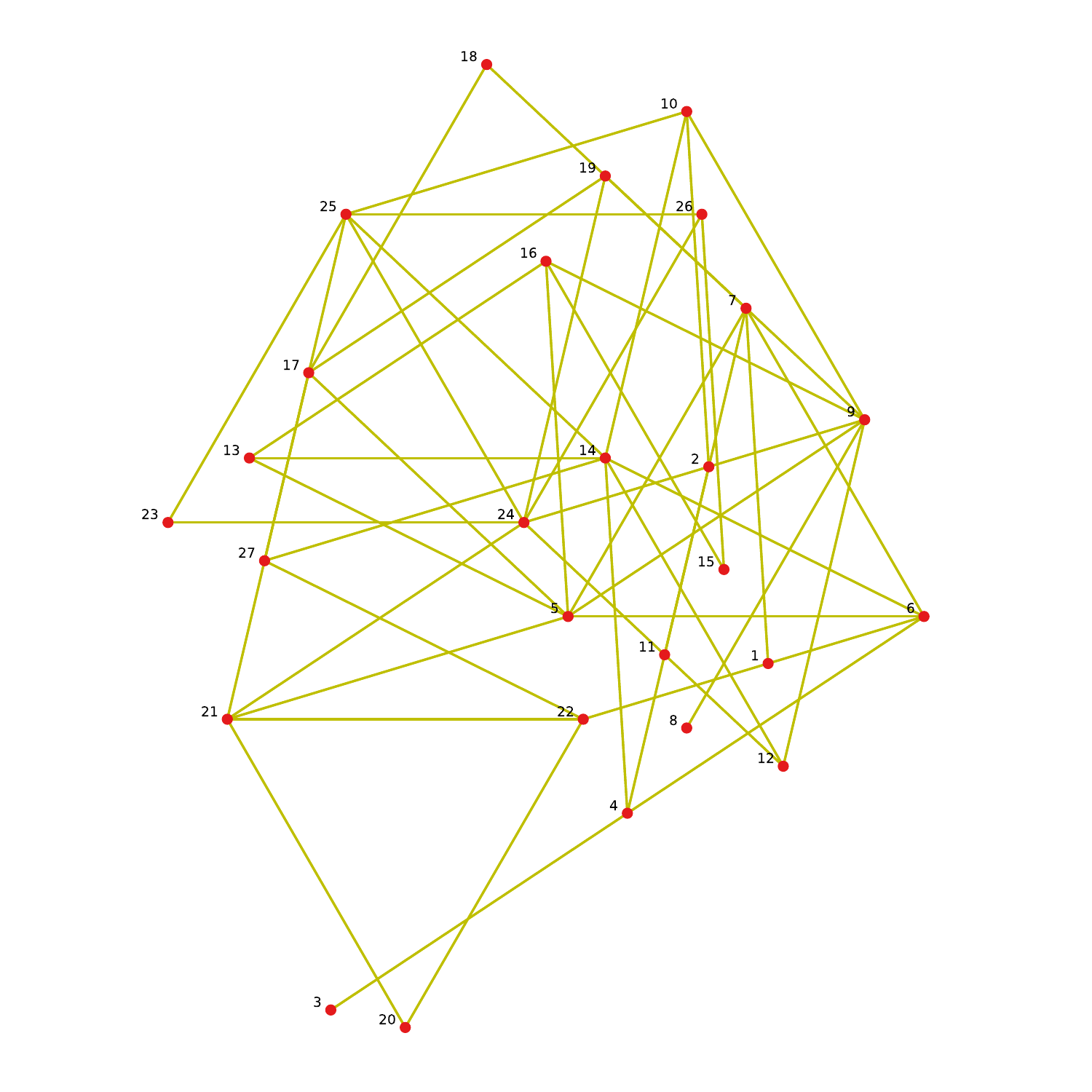}
\vspace{-10pt}
\caption{The unit distance graph $G_{27}$.}
\alt{The unit distance graph $G_{27}$.}
\label{fig_g27}
\end{figure}

\medskip

\begin{theorem}\label{gfcn4}
The geometric fractional chromatic number of $G_{27}$ is
$\chi_{gf}(G_{27}) = 4$.
\end{theorem}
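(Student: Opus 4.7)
\medskip

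The plan is to view $\chi_{gf}(G_{27})$ as the optimal value of a finite linear program and to supply matching primal and dual certificates. The variables are $\gamma(S)$ for $S\in\mathcal{I}(G_{27})$, the constraints are \eqref{noneg}, the $27$ regularity equalities \eqref{agix_prime}, and the congruence equalities \eqref{congr} for every pair of congruent subsets $Y,Y'\subseteq G_{27}$, and the objective is $\sum_S\gamma(S)$. Since $G_{27}$ is a finite unit-distance graph sitting inside the Moser lattice $L_{\mathrm{Moser}}$, the list of independent sets is finite and the congruence relation on subsets can be tested mechanically, for instance by comparing the multiset of pairwise distances together with an orientation invariant. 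In particular the LP has finitely many variables and constraints, and can be written out explicitly.

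\medskip

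For the upper bound $\chi_{gf}(G_{27})\le 4$, I would exhibit an explicit geometric fractional colouring $\gamma^*$ of weight $4$. Note that this upper bound cannot be imported from any inequality of the form $\chi_{gf}(G_{27})\le \chi_{gf}(\R^2)$, because the latter quantity is vacuous by the footnote in Section~\ref{sec2}; a direct construction is essential. The colouring is the output of the computer search described in Appendix~A; once tabulated, checking \eqref{noneg}, \eqref{agix_prime} and \eqref{congr} reduces to rational arithmetic, and the weight is read off as $\sum_S \gamma^*(S)=4$.

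\medskip

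For the lower bound $\chi_{gf}(G_{27})\ge 4$ I would invoke LP duality. After fixing a representative $Y_0$ in each congruence class of subsets of $G_{27}$, the dual has free variables $u_x\in\R$ for each vertex $x$ and $\mu_Y\in\R$ for each $Y\subseteq G_{27}$ with $|Y|\ge 2$, subject to $\sum_{Y\in C}\mu_Y=0$ for every congruence class $C$ of size $\ge 2$, together with one inequality
\[
\sum_{x\in S}u_x+\sum_{\substack{Y\subseteq S\\|Y|\ge 2}}\mu_Y\le 1
\]
for every $S\in\mathcal{I}(G_{27})$, and with objective $\sum_x u_x$. I would exhibit dual feasible $(u_x^*,\mu_Y^*)$ with $\sum_x u_x^*=4$ and verify these finitely many inequalities; weak duality then delivers $\chi_{gf}(G_{27})\ge 4$, and combining with the upper bound yields equality.

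\medskip

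The main obstacle is combinatorial rather than conceptual: for a 27-vertex graph the naive power set has size $2^{27}$, so the enumeration of independent sets and, especially, the partitioning of their subsets into congruence classes must be organised with care. Only classes that meet some independent set contribute to the LP, which cuts the size down to a tractable range, but certifying the congruence relation still requires a canonical form for finite planar point sets in $L_{\mathrm{Moser}}$. Once this combinatorial backbone is in place, the optimisation and the rationalisation of its certificates (clearing denominators to obtain exact rational witnesses) are routine, and the proof reduces to verifying two explicit tables, which is the route taken in Appendix~A.
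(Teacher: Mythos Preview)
Your proposal is correct and follows essentially the same LP-plus-certificates route as the paper: an explicit primal feasible point of weight $4$ for the upper bound and a rational dual witness with objective value $4$ for the lower bound. The paper's execution differs only in details of implementation---its primal certificate is in fact a proper integer $4$-colouring that happens to satisfy \eqref{congr}, and the rational dual witness (with ${\sim}250$-digit entries) is obtained not by ``clearing denominators'' but by symbolically projecting a floating-point numerical solution onto the affine subspace of tight constraints (Appendix~C).
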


\begin{proof} 

In order to determine $\chi_{gf}(G_{27})$ we first need to find all independent subsets of $G_{27}$, and all geometric congruences $Y\cong Y'$ among independent subsets $Y, Y'$. 

\medskip

In supplementary material \cite{web}, we provide the enumeration of all independent subsets of $G_{27}$, and all  congruences among them, as well as the linear programs determining $\chi_{gf}(G_{27})$. 
In order to make all the verification material in \cite{web}  transparent, we describe here the specific form and notation of the linear programs that we are using to determine $\chi_{gf}(G_{27})$. 

\medskip

The primal LP problem is defined by  
constraints \eqref{noneg}, \eqref{agix_prime} and \eqref{congr}. The variables are given by a nonnegative vector $x$ of dimension 182304. We encode \eqref{agix_prime} and \eqref{congr} in a vector $e$ and a matrix $C$, as follows. The vector $e$ is a 0-1 vector of length 182304, with ones corresponding to independent sets containing vertex number 1. By equation \eqref{agix_prime}, we have $\langle e, x\rangle=1$. For other vertices, equation \eqref{agix_prime} will be contained in the matrix $C$, which encodes the geometric congruences $Y\cong Y'$. Given two congruent subsets $Y$ and $Y'$, the row $r$ of $C$ encoding this congruency is a vector of length 182304 with +1 entries at coordinates corresponding to independent sets containing $Y$ and not containing $Y'$, and -1 entries at those containing $Y'$ and not containing $Y$. Constraint \eqref{congr} is then given by $\langle r, x\rangle =0$.   Note here that when $|Y|=|Y'|=1$, this constraint ensures that the total weight falling on each vertex is the same, and hence it is enough to postulate \eqref{agix_prime} for vertex 1, as above. As such, $\chi_{gf}(G_{27})$ is determined by minimizing $\langle {\bf{1}}, x\rangle $ subject to 
$x\ge 0$, $\langle e, x\rangle=1$ and $C.x=0$ (the boldface $\bf{1}$ denotes the all-one vector).

\medskip

This linear program has $182304$ variables (the number of independent sets), and $1+16855$ linear constraints (the 1 corresponding to the vector $e$, and the 16855 corresponding to geometric congruences $Y\cong Y'$;  in fact, we only list a minimal "spanning" set of congruences that imply all existing congruences by transitivity, reflectivity, and symmetry). 

\medskip

We first observe that $\chi_{gf}(G_{27}) \leq 4$, because $G_{27}$ has a (non-fractional) 4-colouring that is also an element of $gfc(G_{27})$. 
Assuming the ordering of vertices as in Definition \ref{g27}, the following vector assigns a colour class to each vertex of $G_{27}$:
$$
\begin{pmatrix}
3 & 4 & 4 & 2 & 1 & 4 & 2 & 4 & 3 & 1 & 4 & 1 & 2 & 3 & 2 & 4 & 4 & 3 & 2 & 4 & 2 & 3 & 1 & 4 & 2 & 3 & 4
\end{pmatrix}
$$

Having access to all congruences $Y\cong Y'$, it can be verified (as in \cite{web}) that this proper 4-colouring is indeed an element of $gfc(G_{27})$, and hence,  $\chi_{gf}(G_{27}) \leq 4$.

\medskip

By linear programming duality, lower bounding $\chi_{gf}(G_{27})$ can be achieved by finding a solution to the dual linear program. In particular, to prove that the optimal value of the primal LP problem is at least 4, we need to construct a vector $z$ of length 1+16855 such that the first coordinate of $z$ is exactly 4, and the other coordinates are arbitrary, in such a way that the inequality
$z^T (e,C)\le \bf{1}$  holds.

Now, introduce the notation $-y$  for the vector containing the last 16855 coordinates of $z$, i.e. $z=(4, -y)$. Then the equation
$z^T (e, C)\le \bf{1}$ reads as $4e - y^T C \le \bf{1}$, or $y^T C - 4 e +\bf{1} \ge \bf{0}$. 
In \cite{web} we present such a (rational) witness $y$, and a computer code verifying that $y$ indeed is a solution to this system of inequalities. The witness $y$ has $16855$ coordinates, each of which is a rational number with an approximately $250$-digit numerator and denominator.  Admittedly, the existence of this particular $y$ does not provide much insight into \emph{why} $\chi_{gf}(G_{27})$ is exactly $4$. We have not yet been successful in obtaining a witness that provides more insight. In \appendixmacro{Appendix~C}, we describe how the witness $y$ was found. (We remark here that Fernando Mario de Oliveira Filho has independently found a rational solution $y_2$ to our dual linear program, however, it also contains large denominators, and hence is also unrevealing.) 

\medskip

We encourage the interested reader to explore the extensive material provided at \cite{web}, which offers all results and verification algorithms in a transparent and reproducible manner.
\end{proof}

\medskip

As a direct corollary of Theorem~\ref{gfcn4}, we obtain the statement in the title of this note. 

\begin{corollary}\label{cor}
$\chi_f(\R^2)\ge 4.$    
\end{corollary}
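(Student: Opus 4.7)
The plan is to chain together Theorem \ref{gfcn4}, Theorem \ref{fcng}, and the trivial inequality between the finitary and the full fractional chromatic number of the plane. The corollary is essentially a bookkeeping step once the previous two theorems are in place, so the proof should be short and direct.

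First I would invoke Theorem \ref{gfcn4} to note that the specific 27-vertex graph $G_{27}\subseteq \R^2$ satisfies $\chi_{gf}(G_{27})=4$. By the definition of the finitary geometric fractional chromatic number as a supremum over finite subgraphs of the plane, this immediately yields
\begin{equation*}
\chi_{gf,0}(\R^2) \;=\; \sup\{\chi_{gf}(G): G\subseteq \R^2,\ G\text{ finite}\} \;\ge\; \chi_{gf}(G_{27}) \;=\; 4.
\end{equation*}

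Next I would apply Theorem \ref{fcng}, which asserts $\chi_{gf,0}(\R^2) = \chi_{f,0}(\R^2)$, to transfer the bound to the finitary fractional chromatic number:
\begin{equation*}
\chi_{f,0}(\R^2) \;=\; \chi_{gf,0}(\R^2) \;\ge\; 4.
\end{equation*}
Finally, since every finite subgraph $G\subseteq \R^2$ satisfies $\chi_f(G)\le \chi_f(\R^2)$ (because any fractional colouring of $\R^2$ restricts to a fractional colouring of $G$ of no greater weight), the supremum obeys $\chi_{f,0}(\R^2)\le \chi_f(\R^2)$, as already recorded in the chain \eqref{chain1}. Combining this with the preceding display gives $\chi_f(\R^2)\ge 4$, completing the proof.

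There is no real obstacle at this stage: all the difficulty has been absorbed into Theorem \ref{fcng} (the amenability-based blow-up argument that upgrades a geometric fractional colouring bound on a single finite graph to an ordinary fractional colouring bound on a much larger union of congruent copies) and into Theorem \ref{gfcn4} (the LP duality witness certifying $\chi_{gf}(G_{27})=4$). Once those are granted, Corollary \ref{cor} follows by assembling the three inequalities above.
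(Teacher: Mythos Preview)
Your proof is correct and follows exactly the same route as the paper: the paper's proof is the single chain $\chi_f(\R^2)\ge \chi_{f,0}(\R^2)=\chi_{gf,0}(\R^2)\ge \chi_{gf}(G_{27})=4$, invoking Theorem~\ref{fcng} and Theorem~\ref{gfcn4}, which is precisely what you have written out in expanded form.
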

\begin{proof}
By Theorem \ref{fcng} and Theorem \ref{gfcn4} we have 
$\chi_f(\R^2)\ge \chi_{f,0}(\R^2)=\chi_{gf,0}(\R^2)\ge \chi_{gf}(G_{27})=4.$
\end{proof}

Finally, we state our conjecture mentioned in the introduction. 

\begin{conj}\label{conj}
$\chi_{f,0}(\R^2)=4$, and for all finite unit distance graphs $G\subseteq \R^2$ we have $\chi_f(G)<4.$     
\end{conj}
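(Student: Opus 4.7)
The plan is to focus on the strict inequality $\chi_f(G) < 4$ for every finite unit distance graph $G \subseteq \R^2$, since the equality $\chi_{f,0}(\R^2) = 4$ follows from it at once: the strict inequality gives $\chi_{f,0}(\R^2) = \sup_G \chi_f(G) \le 4$, and Corollary~\ref{cor} supplies the matching lower bound. Thus the entire content of the conjecture reduces to the strict upper bound on $\chi_f(G)$.

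I would attack the strict inequality by contradiction. Suppose some finite unit distance graph $G$ satisfies $\chi_f(G) \ge 4$. LP duality for the fractional chromatic number produces nonnegative vertex weights $\{x_v\}_{v\in G}$ with $\sum_v x_v \ge 4$ and $\sum_{v \in I} x_v \le 1$ for every independent $I \subseteq G$; equivalently, the atomic measure $\mu = \sum_v x_v \delta_v$ has total mass $\ge 4$ while placing mass at most $1$ on any $1$-avoiding subset of its support. I would then try to convert $\mu$ into a Lebesgue-measurable $1$-avoiding subset $A \subseteq \R^2$ of upper density $\ge 1/4$, by averaging $\mu$ over translates in a F{\o}lner sequence of the lattice spanned by the vertex coordinates of $G$ (in the spirit of the discrete-cube construction of Theorem~\ref{alpha}) and then passing to a weak-$*$ limit. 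The existence of such an $A$ would contradict the strict Fourier-analytic bound $m_1(\R^2) < 1/4$ proved in \cite{ambrus2023density}, yielding the desired conclusion.

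The principal obstacle, and the reason the conjecture remains open, is precisely the passage from the purely atomic witness $\mu$ to a genuine positive-measure $1$-avoiding set. A naive smoothing such as $\mu * \mathbf{1}_{B(0,\eps)}$ destroys the $1$-avoiding property whenever $G$ contains pairs of vertices at distances slightly off from $1$, and after averaging over translates in a lattice such pairs are essentially unavoidable. Overcoming this appears to require either a delicate measurable selection that extracts independent ``bubbles'' near each vertex without losing density, or a sharpening of the Fourier-analytic inequality of \cite{ambrus2023density} that applies directly to finitely-supported LP witnesses rather than only to Lebesgue-measurable sets. The weaker complementary step of showing merely $\alpha_1(G) \ge 1/4$ for every finite UDG (which, via Theorem~\ref{alpha}, would give $\chi_{f,0}(\R^2) \le 4$ but not the strict $\chi_f(G) < 4$) is itself currently out of reach, since the best deterministic lower bound obtainable through a Croft-tile averaging argument yields only $\alpha_1(\R^2) \ge \delta_\mathrm{Croft} \approx 0.2293$. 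Bridging the discrete/measurable divide thus appears to be the decisive missing ingredient in any proof of the conjecture.
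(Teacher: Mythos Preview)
The statement you are asked to prove is labeled \emph{Conjecture}~\ref{conj} in the paper, and the paper offers no proof of it; it is explicitly presented as open. Your proposal is therefore not to be compared against any argument in the paper, because there is none. You yourself acknowledge this: your write-up is not a proof but a sketch of a plausible line of attack together with an honest account of why it breaks down.

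On the substance of what you wrote: your reduction of the conjecture to the single assertion ``$\chi_f(G)<4$ for every finite unit distance graph $G$'' is correct, since the paper already establishes $\chi_{f,0}(\R^2)\ge 4$ in the course of proving Corollary~\ref{cor}. Your proposed strategy---take an optimal dual LP witness $\mu$ with total mass $\ge 4$ and independent-set mass $\le 1$, and try to manufacture from it a measurable $1$-avoiding set of density $\ge 1/4$, contradicting $m_1(\R^2)<1/4$ from \cite{ambrus2023density}---is a natural one, and your identification of the obstacle (smoothing destroys the $1$-avoiding property; lattice averaging only recovers $\alpha_1(\R^2)$, not $m_1(\R^2)$) is accurate. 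Indeed, the paper itself flags exactly this discrete/measurable gap in the paragraph following Theorem~\ref{alpha}: if the conjecture is true, then $\alpha_1(\R^2)\ne m_1(\R^2)$, so no argument that passes freely between the two can succeed.

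In short: there is no gap to name in your argument because you do not claim to have one; you correctly recognize the statement as open and give a sensible account of where the difficulty lies, consistent with the paper's own commentary.
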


\section{Acknowledgements}

We are indebted to Gergely Ambrus and Adrián Csiszárik for valuable discussions, to Aliaksei Vasileuski
for pointing out an error in the first version of this paper, to Fernando Mario de Oliveira Filho for independently verifying that $\chi_{gf}(G_{27})=4$, and to Pjotr Buys and Raimundo Saona for valuable suggestions to improve the presentation of the paper.

M.~M.~was supported by grants NKFIH-132097 and NKFIH-146387. I.~Z.~R.~was supported by grants NKFIH-146387 and KKP-133819.
D.~V.~and P.~Zs.~were supported by the Ministry of Innovation and Technology NRDI Office within the framework of the Artificial Intelligence National Laboratory (RRF-2.3.1-21-2022-00004).

\vspace{0.3 cm}
\noindent
{\sc Máté Matolcsi} ({\em corresponding author})

\noindent
{\em HUN-REN Alfréd Rényi Institute of Mathematics, Reáltanoda u. 13-15, 1053, Budapest, Hungary,\\
and Department of Analysis and Operations Research,
Institute of Mathematics,
Budapest University of Technology and Economics,
Mûegyetem rkp. 3., H-1111 Budapest, Hungary.}

\noindent
e-mail address: \texttt{matolcsi.mate@renyi.hu}

\medskip
\noindent
{\sc Imre Z. Ruzsa}

\noindent
{\em HUN-REN Alfréd Rényi Institute of Mathematics, Reáltanoda u. 13-15, 1053, Budapest, Hungary}

\noindent
e-mail address: \texttt{ruzsa@renyi.hu}

\medskip
\noindent
{\sc Dániel Varga}

\noindent
{\em HUN-REN Alfréd Rényi Institute of Mathematics, Reáltanoda u. 13-15, 1053,  Budapest, Hungary}

\noindent
e-mail address: \texttt{daniel@renyi.hu}

\medskip
\noindent
{\sc Pál Zsámboki}

\noindent
{\em HUN-REN Alfréd Rényi Institute of Mathematics, Reáltanoda u. 13-15, 1053, Budapest, Hungary}

\noindent
e-mail address: \texttt{zsamboki.pal@renyi.hu}

\newpage

\appendix

\section{Search algorithms for finding the graph $G_{27}$}\label{app_a}

In this section we describe the computer search method we used to find a graph $X\subset\mathbb C$ with $\chi_{gf}(X)=4$. Our search algorithm is restricted to the Moser lattice $L_\mathrm{Moser}$.

\begin{nota}
    Let $\mathscr X_\mathrm{Moser}$ denote the collection of finite graphs $X\subset L_\mathrm{Moser}$.
\end{nota}

\begin{example}\label{example:moser spindle}
    One possible embedding of the Moser spindle into $L_\mathrm{Moser}$ is
    $$
    M_7=\{0, 1, \omega_1, \omega_3, \omega_1\omega_3, 1 + \omega_1, \omega_3 + \omega_1\omega_3\}\in \mathscr X_\mathrm{Moser}.
    $$
\end{example}

To make the search effective, we seek to avoid calculating $\chi_{gf}$ for geometrically congruent graphs. As a heuristic step towards that end, for $X\in\mathscr X_\mathrm{Moser}$, we transform it to a canonical form with respect to the following operations:
\begin{enumerate}
    \item Translation by an element of the Moser lattice $L_\mathrm{Moser}$.
    \item Rotation by $\frac{\pi}{3}$, that is multiplication by $\omega_1$.
    \item Reflection with respect to the line through the origin and $\frac{\omega_1 + \omega_3}{2}$.
\end{enumerate}

\subsection{Canonization of graphs in the Moser lattice}\label{s:canonization}

In this Subsection, we describe the canonization construction we use to deduplicate the collection of graphs we consider in the computer search
with respect to translation, rotation by $\frac{\pi}{3}$, and reflection.

\begin{defi}
    Let $x=a+b\omega_1+c\omega_3+d\omega_1\omega_3\in L_\mathrm{Moser}$ be a point in the Moser lattice.Then its \emph{Moser coefficients} are the quadruple $(a,b,c,d)\in\mathbb Z^4$.
\end{defi}

\begin{remark}
    By Remark \ref{rem:Moser is lattice}, there exists an isomorphism of Abelian groups
    $$
    \mathbb Z^4\xrightarrow{\phi((a,b,c,d))=a+b\omega_1+c\omega_3+d\omega_1\omega_3}L_\mathrm{Moser}.
    $$
    That is, every element $x\in L_\mathrm{Moser}$ of the Moser lattice has a unique set of Moser coefficients $\phi^{-1}(x)\in\mathbb Z^4$.
\end{remark}

\begin{example}
    Consider the embedding $M_7\subset L_\mathrm{Moser}$ of the Moser spindle shown in Example \ref{example:moser spindle}. Then the corresponding set of Moser coefficients is the following:
    $$
    \phi^{-1}(M_7)=\left\{
    \begin{pmatrix} 0 \\ 0 \\ 0 \\ 0 \end{pmatrix},
    \begin{pmatrix} 1 \\ 0 \\ 0 \\ 0 \end{pmatrix},
    \begin{pmatrix} 0 \\ 1 \\ 0 \\ 0 \end{pmatrix},
    \begin{pmatrix} 0 \\ 0 \\ 1 \\ 0 \end{pmatrix},
    \begin{pmatrix} 0 \\ 0 \\ 0 \\ 1 \end{pmatrix},
    \begin{pmatrix} 1 \\ 1 \\ 0 \\ 0 \end{pmatrix},
    \begin{pmatrix} 0 \\ 0 \\ 1 \\ 1 \end{pmatrix}
    \right\}
    $$
\end{example}

\begin{nota}\label{nota:Rot, Refl}
    Let $\mathrm{Rot}$ resp.~$\mathrm{Refl}$ denote the automorphisms of $\mathbb Z^4$ induced by the automorphisms of the Moser lattice given by rotation by $\frac{\pi}{3}$ resp.~reflection with respect to the line through the origin and point $\frac{\omega_1+\omega_3}{2}$. They can be given as matrices to multiply vectors with from the left as follows:
    $$
    \mathrm{Rot}=\begin{pmatrix}
        0 & -1 & 0 & 0 \\
        1 & 1 & 0 & 0 \\
        0 & 0 & 0 & -1 \\
        0 & 0 & 1 & 1
    \end{pmatrix},\quad
    \mathrm{Refl}=\begin{pmatrix}
        0 & 0 & 0 & 1 \\
        0 & 0 & 1 & 0 \\
        0 & 1 & 0 & 0 \\
        1 & 0 & 0 & 0
    \end{pmatrix}.
    $$
\end{nota}

\begin{construction}
    Let $X\in\mathscr X_\mathrm{Moser}$ be a finite graph in the Moser lattice. Then we obtain its \emph{canonization} $\mathrm{Canonize}(X)$ as follows:

    I. Consider the following collection of transformations of $X$:
    \begin{align*}
    \mathscr Y=
    \{&\phi^{-1}(X), \mathop{\mathrm{Rot}}\phi^{-1}(X), \dotsc, {\mathop{\mathrm{Rot}}}^5 \phi^{-1}(X),\\
    &\mathop{\mathrm{Refl}} \phi^{-1}(X), \mathop{\mathrm{Rot}}\mathop{\mathrm{Refl}} \phi^{-1}(X), \dotsc, {\mathop{\mathrm{Rot}}}^5 \mathop{\mathrm{Refl}} \phi^{-1}(X)\}.
    \end{align*}
    Out of this collection, let $Y=\mathop{\mathrm{argmin}}_{Y\in\mathscr Y}\min_{x\in Y}x$, where we order the elements of the lattice $\mathbb Z^4$ with the lexicographical ordering.

    II. Let $Y=\{(a_{i1},a_{i2},a_{i3},a_{i4}):1\le i\le n\}$. Then for $1\le j\le 4$, let $b_j=\min_{1\le i\le n}a_{ij}$. With this, we can let
    $$
    \mathrm{Canonize}(X)=\phi(\{(a_{i1}-b_1,a_{i2}-b_2,a_{i3}-b_3,a_{i4}-b_4):1\le i\le n\}).
    $$
\end{construction}

\subsection{Backtracking beam search}\label{ss:backtracking beam search}

We perform a computer search to find a graph $G_{27}\in\mathscr X_\mathrm{Moser}$ with $\chi_{gf}(G_{27})=4$. We now describe the search method.

\begin{defi}\label{defi:children}
    Let $X\in\mathscr X_\mathrm{Moser}$ be a graph that is contained in the Moser lattice. Then the set of its \emph{children} $\mathrm{Children}(X)$ is the collection of graphs of the form $\mathrm{Canonize}(X\cup\{x\})$ such that
    \begin{enumerate}
        \item We have $x\notin X$.
        \item One of the following assertions hold:
        \begin{enumerate}
            \item There exists $x_0\in X$ such that $x-x_0\in\{1,-1,\omega_1,-\omega_1,\omega_3,-\omega_3,\omega_1\omega_3,-\omega_1\omega_3\}$.
            \item There exist $x_0,x_1\in X$ such that $\{x_0,x_1,x\}$ is a regular triangle of side length 1.
            \item There exist $x_0,x_1,x_2\in X$ such that $\{x_0,x_1,x_2,x\}$ is a parallelogram of side length 1.
        \end{enumerate}
    \end{enumerate}
    The set of \emph{parents} of $X$ is $\mathrm{Parents}(X)=\{\mathrm{Canonize}(Y):Y\subset X,\,|Y| + 1 = |X|\}$.

    For a collection $\mathscr X\subset\mathscr X_\mathrm{Moser}$ of graphs in the Moser lattice, we let $\mathrm{Children}(\mathscr X)=\cup_{X\in\mathscr X}\mathrm{Children}(X)$ and $\mathrm{Parents}(\mathscr X)=\cup_{X\in\mathscr X}\mathrm{Parents}(X)$.
\end{defi}

\begin{remark}
    One can verify by hand that if $X\in\mathscr X_\mathrm{Moser}$ is a graph in the Moser lattice, then all its children are in the Moser lattice: $\mathrm{Children}(X)\subset\mathscr X_\mathrm{Moser}$.
\end{remark}

Our search method is an improved version of the approach described in \cite[\S7]{ambrus2023density}. Starting out with the Moser spindle: $\mathscr X=\{M_7\}$, step by step we replace $\mathscr X$ by a subset of $\mathrm{Children}(\mathscr X)$ or $\mathrm{Parents}(\mathscr X)$ with high $\chi_{gf}$. We provide pseudocode for the full algorithm in Appendix B. To solve linear programs, in particular to numerically calculate geometric fractional chromatic numbers, we use the Gurobi solver \cite{gurobi}.

\begin{figure}[ht!]
  \centering
  \begin{subfigure}[t]{0.49\textwidth} 
    \begin{tikzpicture}
      \begin{axis}[
        xlabel={$n$},
        ylabel={$\chi_{gf}(G)$},
        grid=major,
        grid style={dashed,gray!30},
        mark size=2pt,
        legend style={at={(0.5,-0.2)}, anchor=north, legend columns=-1},
        width=7cm,
        height=6cm,
        ymin=3.45,
        ymax=4.05,
        ytick={3.5,3.6,3.7,3.8,3.9,4.0}
      ]

        \addplot[
          thick,
          color=black,
          mark=*,
          mark options={color=blue!65}]
        coordinates {
          (7, 3.5)
          (8, 3.583333333)
          (9, 3.75)
          (10, 3.75)
          (11, 3.749999998)
          (12, 3.833333254)
          (13, 3.85)
          (14, 3.9)
          (15, 3.928571427)
          (16, 3.935714283)
          (17, 3.950913224)
          (18, 3.961096132)
          (19, 3.966426115)
          (20, 3.968887733)
          (21, 3.972010674)
          (22, 3.976350098)
          (23, 3.98220376)
          (24, 3.987370491)
          (25, 3.992761113)
          (26, 3.996696235)
          (27, 4)
          (28, 4)
          (28, 4)
          (29, 4)

          (7, 3.5)
          (8, 3.583333333)
          (9, 3.75)
          (10, 3.75)
          (11, 3.749999998)
          (12, 3.833333254)
          (13, 3.85)
          (14, 3.9)
          (15, 3.928571427)
          (16, 3.935714283)
          (17, 3.950913224)
          (18, 3.961096132)
          (19, 3.966426115)
          (20, 3.968887733)
          (21, 3.972010674)
          (22, 3.976350098)
          (23, 3.98220376)
          (24, 3.987370491)
          (25, 3.992761113)
          (26, 3.996696235)
          (27, 4)
          (28, 4)
          (28, 4)
          (29, 4)
        };
      \end{axis}
    \end{tikzpicture}
    \caption{Full plot.}
  \end{subfigure}
  \hfill
  \begin{subfigure}[t]{0.49\textwidth} 

    \begin{tikzpicture}
      \begin{axis}[
        xlabel={$n$},
        ylabel={$\chi_{gf}(G)$},
        grid=major,
        grid style={dashed,gray!30},
        mark size=2pt,
        legend style={at={(0.5,-0.2)}, anchor=north, legend columns=-1}
        width=7cm,
        height=6cm,
        xmin=17.5,
        ymin=3.955,
        ymax=4.005,
        ytick={3.96,3.97,3.98,3.99,4.0}
    ]

        \addplot[
          thick,
          color=black,
          mark=*,
          mark options={color=blue!65}]
        coordinates {
          (17, 3.950913224)
          (18, 3.961096132)
          (19, 3.966426115)
          (20, 3.968887733)
          (21, 3.972010674)
          (22, 3.976350098)
          (23, 3.98220376)
          (24, 3.987370491)
          (25, 3.992761113)
          (26, 3.996696235)
          (27, 4)
          (28, 4)
          (28, 4)
          (29, 4)
        };

      \end{axis}
    \end{tikzpicture}
    \caption{Close-up of plot at $n \geq 18$.}
  \end{subfigure}
  \caption{Plot of the highest $\chi_{gf}(G)$ value encountered among $n$-vertex graphs $G$, for each value of $n$.}
  \label{fig:beam search bests}
\end{figure}

\subsection{Symbolic verification}

The search method presented above finds a specific graph $G_{27}\in\mathscr X_\mathrm{Moser}$, as shown in Figure~\ref{fig_g27}. See Section \ref{sec4} for the Moser coefficients of the vertices of this graph.

\section{Backtracking beam search full algorithm}\label{s:backtracking beam search full algorithm}

We provide the pseudocode for the full algorithm below. Note that Algorithm \ref{algorithm:backtracking beam search} has an infinite loop. This is because we were unsure how large the graphs we would need to check were. Moreover, calculating $\chi_{gf}$ takes significantly more time for larger graphs. Therefore, upon reaching a vertex count of 23 and a graph $X$ with $\chi_{gf}(X)\approx 3.982$, we switched to greedy search, that is beam search with $\mathtt{beam\_width}=1$, and we only looked at children. We only looked at children until we found some graph $G$ that increased $\chi_{gf}$ by at least $0.004$. With this, in 4 steps we found a graph $G_{27}$ with 27 vertices and $\chi_{gf}(G_{27})=4$. Afterwards, we made the following two sanity checks for Conjecture \ref{conj}:
\begin{enumerate}
    \item We checked all children of $G_{27}$ that can be gotten by adding a vertex that is of distance 1 to at least two vertices of $G_{27}$, the method we used in \cite[\S7]{ambrus2023density} and
    \item We checked all children of $G_{27}$ and a few hundred grandchildren with the notion of children as defined here, in Definition \ref{defi:children}.
\end{enumerate}
All of these descendants $G$ have $\chi_{gf}(G)=4$. See Figure \ref{fig:beam search bests} for the highest $\chi_{gf}(G)$ values encountered among $n$-vertex graphs, for each $7\le n\le29$.

\SetKw{Continue}{continue}
\SetKw{Return}{return}
\SetKwFor{Loop}{Loop}{}{EndLoop}
\SetKwProg{Def}{def}{:}{}

\begin{algorithm}
    \DontPrintSemicolon
    \Def{\texttt{get\_beam}($\mathscr X\subset\mathscr X_\mathrm{Moser},\,\mathtt{beam\_width}\in\mathbb Z$)}{
        \If{$|\mathscr X|\le\mathtt{beam\_width}$}{
        \Return $\mathscr X$\;
        }
        \Else{
        $\mathtt{threshold} \longleftarrow \chi_{gf}(X)$ where $X$ is the entry in $\mathscr X$ of index \texttt{beam\_width} (in 1-based indexing) when listed by descending order with respect to $\chi_{gf}$\;
        \Return $\{X\in\mathscr X:\chi_{gf}(X)\ge\mathtt{threshold}\}$\;}
    }
    $\mathtt{beam\_width} \longleftarrow 100$\;
    $\mathtt{best}(7)=\chi_{gf}(M_7)$\;
    $\mathtt{best}(i)=0\text{ for all }i>7$\;
    $\mathtt{direction \longleftarrow 1}$\;
    $\mathtt{forward}(i)=0\text{ for all }i\ge7$\;
    $\mathtt{num\_vertices\_min} \longleftarrow 7$\;
    $\mathtt{seen}(7)=\{M_7\}$\;
    $\mathtt{seen}(i)=\emptyset\text{ for all }i>7$\;
    $\mathscr X \longleftarrow \{M_7\}$\;
    $i \longleftarrow 8$\;
    \Loop{}{
        \If{$\mathtt{direction}=1$}{
            $\mathscr X \longleftarrow \mathrm{Children}(\mathscr X)\setminus\mathtt{seen}(i)$\;
        }
        \Else{
            $\mathscr X \longleftarrow \mathrm{Parents}(\mathscr X)\setminus\mathtt{seen}(i)$\;
        }
        \If{$\mathscr X=\emptyset$}{
            $\mathtt{direction} \longleftarrow 1$\;
            $i \longleftarrow \max\{i\ge7:\mathtt{seen}(i)\ne\emptyset\} + 1$\;
            $\mathscr X \longleftarrow \mathtt{get\_beam}(\mathtt{seen}(i))$\;
            \Continue\;
        }
        $\mathtt{seen}(i) \longleftarrow \mathtt{seen}(i) \cup \mathscr X$\;
        $\mathtt{new\_best} \longleftarrow \max_{X\in\mathscr X}\chi_{gf}(X)$\;
        \If{$\mathtt{new\_best}\ge\mathtt{best}(i)\ \mathbf{and}\ i>\mathtt{num\_vertices\_min}$}{
            $\mathtt{best}(i) \longleftarrow \mathtt{new\_best}$\;
            $\mathtt{direction} \longleftarrow -1$\;
            $\mathscr X \longleftarrow \mathtt{get\_beam}(\mathscr X)$\;
            $\mathtt{forward}(i) \longleftarrow \mathtt{get\_beam}(\mathtt{forward}(i) \cup \mathscr X)$\;
        }
        \Else {
            $\mathtt{direction} \longleftarrow 1$\;
            $\mathscr X \longleftarrow \mathtt{get\_beam}(\mathtt{forward}(i)\cup\mathscr X)$\;
            $\mathtt{forward}(i) \longleftarrow \emptyset$\;
        }
        $i \longleftarrow i + \mathtt{direction}$\;
        
    }
    \caption{Backtracking beam search}
    \label{algorithm:backtracking beam search}
\end{algorithm}

\vskip 3cm

\section{The dual witness}\label{dw}

As described in Section~\ref{sec4}, to prove Theorem~\ref{gfcn4}, we need to present any point $y$ of the dual feasible region $y^T C - 4 e +\bf{1} \ge \bf{0}$. Let us denote this polytope by $W$.

Employing an LP solver that operates on rational numbers is infeasible given the size of the LP we need to handle. Our numerical LP solver provides a floating point solution $\hat{y}$ to the dual  of the linear program determining $\chi_{gf}(G_{27})$. Due to unavoidable numerical errors, the existence of $\hat{y}$ only implies a lower bound $\chi_{gf}(G_{27}) > 4 - 10^{-8}$.

Our strategy is as follows: we take a numerical solution $\overline{y}$ lying very close to $W$, interpret it as a rational vector, and orthogonally project it onto $W$. We start by selecting a set of inequalities that are ``supposed to be equalities'', that is, are either violated, or sharp up to a numerical threshold $\varepsilon=10^{-12}$. Then, using symbolic QR decomposition, we orthogonally project $\overline{y}$ onto the affine subspace defined by these equalities. We denote the result of the projection by $y$. For $y$, the selected inequalities are now sharp, and thanks to the fact that $||y-\overline{y}||_2$ is small, the rest of the inequalities remain respected.

The obvious candidate for $\overline{y}$ is $\hat{y}$, the inexact numerical solution of the dual LP, and indeed $\hat{y}$ lies very close to $W$. However, for $\hat{y}$ it is not obvious how to correctly choose the set of inequalities that we enforce to be equalities when doing the linear projection. For this reason, we solve one more linear program: we choose $\overline{y}$ to be the numerical solution of the dual LP which minimizes $||y||_\infty$ while $y\in W$. For this vector, the inequalities have the desirable max-margin property that 168 inequalities are sharp up to precision $10^{-12}$, and the rest all have significantly larger slack. Orthogonally projecting $\overline{y}$ onto the affine subspace defined by the 168 equations leads to a $y$ that satisfies all our inequalities, proving $\chi_{gf}(G_{27}) \geq 4$. See \cite{web} for code and data sufficient to reproduce this verification.


\begin{thebibliography}{99}


\bibitem{ambrus2023density}
G. Ambrus, A. Csisz{\'a}rik, M. Matolcsi, D. Varga, P. Zs{\'a}mboki, {\em The density of planar sets avoiding unit distances.}
Mathematical Programming, 
{\bf 207}, 303-327, (2024)

\bibitem{BePS21} T. Bellitto, A. P\^{e}cher, A. S\'{e}dillot, {\em On the density of sets of the Euclidean plane avoiding distance 1.}  Discrete Mathematics \& Theoretical Computer Science {\bf 23} (1), 5153 (2021)

\bibitem{blu}
A. Blumenthal, B. Lidick\'y, R. R. Martin, S. Norin, F. Pfender, J. Volec: {\em Counterexamples
to a conjecture of Harris on Hall ratio}, SIAM Journal on Discrete Mathematics, {\bf 36}(3):1678-1686, (2022)

\bibitem{edb}
N. G. de Bruijn, P. Erd\H os: {\em A colour problem for infinite graphs and a problem in the theory of relations}, Nederl. Akad. Wetensch. Proc. Ser. A, 54: 371-373, 1951.

\bibitem{cra}
D.W. Cranston, L. Rabern: {\em The fractional chromatic number of the plane}, Combinatorica {\bf 37}, 837-861 (2017). 

\bibitem{Cr67} H.T. Croft, {\em Incidence incidents}. Eureka {\bf 30}, 22--26 (1967)

\bibitem{dvo}
Z. Dvo\v{r}\'ak, P. O. de Mendez, H. Wu: {\em 1-subdivisions, the fractional chromatic number
and the Hall ratio}, Combinatorica, {\bf 40}:759-774, (2020)

\bibitem{gar}
A. Garrido, {\em An introduction to amenable groups}, Lecture Notes, 
\url{https://www.math.uni-duesseldorf.de/~garrido/amenable.pdf}


\bibitem{deG18} A. de Grey, {\em
The chromatic number of the plane is at least 5}.  Geombinatorics {\bf 28} (1), 18--31  (2018)

\bibitem{gurobi}
Gurobi Optimization, LLC,
{\em Gurobi Optimizer Reference Manual}.
\url{https://www.gurobi.com}
(2023)

\bibitem{har}
D. G. Harris: {\em Some results on chromatic number as a function of triangle count}, SIAM Journal on Discrete Mathematics, {\bf 33}(1):546-563, 2019.

\bibitem{hod}
R. Hochberg, P. O'Donnell: {\em A large independent set in the unit distance graph},  Geombinatorics 2(4):83-84, (1993).

\bibitem{jan}
B. Janzer, R. Steiner, B. Sudakov: {\em Chromatic number and regular subgraphs}, arXiv
preprint arXiv:2410.02437, 2024.

\bibitem{john}
P. D. Johnson Jr: {\em The fractional chromatic number, the Hall ratio, and
the lexicographic product}, Discrete Mathematics, {\bf 309}, pp. 4746-4749, (2009)

\bibitem{mah}
S. L. Mahan: {\em The fractional chromatic number of the plane}, Master’s thesis, Uni-
versity of Colorado Denver, 1995.

\bibitem{nes}
J. Ne\v{s}et\v{r}il, P. Ossona de Mendez: {\em Grad and classes with bounded
expansion I. Decompositions}, European J. Combin., 29 (2008), pp. 760–
776.

\bibitem{Pa22} J. Parts, {\em PolyMath16, Comment 37575}. \url{https://dustingmixon.wordpress.com/2021/02/01/polymath16-seventeenth-thread-declaring-victory/\#comment-37575}

\bibitem{PM22} PolyMath16: Hadwiger-Nelson problem. \url{https://asone.ai/polymath/index.php?title=Hadwiger-Nelson\_problem} (2022)

\bibitem{sche}
E. R. Scheinerman, D. H. Ullman: {\em Fractional Graph Theory: A rational approach to the theory of graphs}, Wiley-Interscience Series in Discrete Mathematics and Optimization, John Wiley \& Sons, Inc., New York, 1997.

\bibitem{steiner}
R. Steiner: {\em Fractional chromatic number vs. Hall ratio}, arXiv:2411.16465


\bibitem{tao}
Terence Tao: {\em Some notes on amenability}, 
https://terrytao.wordpress.com/2009/04/14/some-notes-on-amenability/

\bibitem{web}
Supplementary material for `The fractional chromatic number of the plane is at least 4'. \url{https://bit.ly/fcn-4} 2024.

\end{thebibliography}
\end{document}